\documentclass[12pt]{amsart}
\usepackage{geometry}
\usepackage{graphicx}
\usepackage{amssymb}
\usepackage{epstopdf}
\usepackage{amsmath,amscd}
\usepackage{amsthm}
\usepackage{url,verbatim}

\RequirePackage[colorlinks,citecolor=blue,urlcolor=blue]{hyperref}
\usepackage{breakurl}
\theoremstyle{plain}
\DeclareGraphicsRule{.tif}{png}{.png}{`convert #1 `dirname #1 `/`basename #1 .tif`.png}

\newtheorem{theorem}{Theorem}
\newtheorem{definition}[theorem]{Definition}
\newtheorem{lemma}[theorem]{Lemma}
\newtheorem{proposition}[theorem]{Proposition}

\newtheorem{example}[theorem]{Example}

\newtheorem{remark}[theorem]{Remark}

\newcommand\es{\varnothing}

\newcommand\ol{\overline}
\newcommand\Aut{\mathrm{Aut}}
\newcommand \sA{\mathcal{A}}

\newcommand\sP{{\mathcal P}}
\newcommand\sH{{\mathcal H}}

\newcommand\ZZ{{\mathbb Z}}
\newcommand\NN{{\mathbb N}}
\newcommand\QQ{{\mathbb Q}}
\renewcommand\a{\alpha}
\newcommand\om{\omega}
\newcommand\g{\gamma}

\newcommand\be{\beta}
\newcommand\si{\sigma}
\newcommand\eps{\epsilon}
\renewcommand\th{\theta}
\newcommand\De{\Delta}
\newcommand\qq{\qquad}
\newcommand\q{\quad}
\newcommand\resp{respectively}
\newcommand\spz{\mathrm{span}}

\newcommand\oo{\infty}
\newcommand\sG{{\mathcal G}}

\newcommand\Ga{\Gamma}
\newcommand\Si{\Sigma}
\newcommand\de{\delta}
\newcommand\id{{\bf 1}}

\newcommand\pc{p_{\text{\rm c}}}

\renewcommand\ell{l}

\newcommand\Tc{T_{\text{\rm c}}}
\newcommand\zc{z_{\text{\rm c}}}

\newcommand\olV{\ol V}
\newcommand\olE{\ol E}

\newcommand\Stab{\textrm{Stab}}

%\def\vecsign{\mathchar"017E}
%\def\dvecsign{\smash{\stackon[-1.95pt]{\vecsign}{\rotatebox{180}{$\vecsign$}}}}
%\newcommand\dvec[1]{\def\useanchorwidth{T}\stackon[-4.2pt]{#1}{\,\dvecsign}}
%\usepackage{stackengine}
%\stackMath

\newcommand\pd{\partial}

\newcommand\sAz{\sH}

%{\overset{\lra}{#1}}
%{\overset{\to}{#1}}

\newcommand\ghf{graph height function}
\newcommand\ughf{unimodular graph height function}
\newcommand\GHF{group height function}
\newcommand\hdi{$\sH$-difference-invariant}
\newcommand\normal{\trianglelefteq}
\newcommand\what{\widehat}

\newcounter{mycount}

\newenvironment{numlist}{\begin{list}{\arabic{mycount}.}%
   {\usecounter{mycount}\labelwidth=1cm\itemsep 0pt}}{\end{list}}
\newenvironment{letlist}{\begin{list}{\rm(\alph{mycount})}%
   {\usecounter{mycount}\labelwidth=1cm\itemsep 0pt}}{\end{list}}

\numberwithin{equation}{section}
\numberwithin{theorem}{section}
\numberwithin{figure}{section}

\title[Locality of connective constants]{Locality of connective constants}
\author{Geoffrey R.\ Grimmett}
\address{Statistical Laboratory, Centre for
Mathematical Sciences, Cambridge University, Wilberforce Road,
Cambridge CB3 0WB, UK}
\email{g.r.grimmett@statslab.cam.ac.uk}
\urladdr{\url{http://www.statslab.cam.ac.uk/~grg/}}
\author{Zhongyang Li}
\address{Department of Mathematics,
University of Connecticut,
Storrs, Connecticut 06269-3009, USA} 
\email{zhongyang.li@uconn.edu}
\urladdr{\url{http://www.math.uconn.edu/~zhongyang/}}

\begin{document}

\begin{abstract}
The connective constant $\mu(G)$ of a quasi-transitive graph $G$
is the exponential growth rate of the number of self-avoiding walks 
from a given origin. We prove a locality theorem
for connective constants, namely, that the connective constants
of two graphs are close in value whenever the graphs agree
on a large ball around the origin (and a further condition is satisfied). 
The proof is based on a generalized bridge
decomposition of self-avoiding walks,
which is valid subject to the assumption that the
underlying graph is quasi-transitive and possesses a so-called 
\emph{\ughf}. 

\end{abstract}

\date{29 November 2014, revised 14 July 2018} %%2 July 2018}

\keywords{Self-avoiding walk, connective constant, 
vertex-transitive graph, quasi-transitive graph, bridge decomposition, Cayley graph, unimodularity}

\subjclass[2010]{05C30, 82B20}

\maketitle

\section{Introduction, and summary of results}

There is a rich theory of interacting systems on infinite graphs.
The probability measure governing a  process has, typically, a continuously varying
parameter, $z$ say, and there is a singularity at some 
\lq critical point' $\zc$. 
The numerical value of 
$\zc$ depends in general on the
choice of underlying graph $G$, and a significant
part of the associated literature is directed towards estimates
of $\zc$ for different graphs. In most cases of interest, 
the value of $\zc$ depends on more than the geometry of some bounded domain only.  
This observation provokes the question of \lq locality': to what degree
is the value of $\zc$ determined by the knowledge of a bounded
domain of $G$?

The purpose of the current paper is to present a locality theorem
(namely, Theorem \ref{thm2}) for the connective constant $\mu(G)$ of the graph $G$.
A self-avoiding walk (SAW) is a path that visits no 
vertex more than once. 
SAWs were introduced in the study of long-chain polymers 
in chemistry (see, for example, the 1953 volume
of Flory, \cite{f}), and their theory has been much developed since 
(see the book of Madras and Slade, \cite{ms}, 
and the recent review \cite{bdgs}). If the underlying 
graph $G$ has some periodicity,  the number of $n$-step SAWs 
from a given origin grows  exponentially
with some growth rate $\mu(G)$ called the 
\emph{connective constant} of the graph $G$. 

There are only few graphs $G$ for which the numerical value of
$\mu(G)$ is known exactly (detailed references for a number of such cases may be found in \cite{GrL1}), 
and a substantial part of
the literature on SAWs is devoted to inequalities
for  $\mu(G)$. The
current paper may be viewed in this light, as a continuation of the
series of papers on the broad topic of connective constants of transitive graphs by
the same authors, see  \cite{GrL2, GrL3, GrL1,GrLrev2}.

The main result (Theorem \ref{thm2}) of this paper is as follows. 
Let $G$, $G'$ be infinite, vertex-transitive graphs,
and write $S_K(v,G)$ for the $K$-ball around the vertex $v$ in $G$.
If $S_K(v,G)$ and $S_K(v',G')$
are isomorphic as rooted graphs, then 
\begin{equation}\label{eq:loc1}
|\mu(G)-\mu(G')| \le \eps_K(G),
\end{equation}
where $\eps_K(G)\to 0$ as $K \to\oo$.  
(A related result holds for quasi-transitive graphs.)
This is proved subject to certain conditions on the graphs $G$, $G'$,
of which the primary condition is that they support so-called \lq \ughf s'
(see Section \ref{sec:sec2} for the definition of a \ghf).
The existence of \ughf s permits the use of a \lq bridge
decomposition' of SAWs (in the style of the work of
Hammersley and Welsh \cite{HW62}), and this leads in turn
to computable sequences that converge to $\mu(G)$ from above
and below, \resp. The locality result of \eqref{eq:loc1}
may be viewed as a partial answer to a question of
Benjamini, \cite[Conj.\ 2.3]{Ben13}, made 
independently of the work reported here.

A class of vertex-transitive graphs of special interest is provided
by the Cayley graphs of finitely generated groups.  Cayley
graphs have algebraic as well as geometric structure, and this
allows a deeper investigation of locality and of graph height functions.
The corresponding investigation is reported in the companion paper \cite{GL-Cayley} where,
in particular, we present a method for the construction  of a \ghf\
via a suitable harmonic function on the graph.

The locality question for percolation
was approached by Benjamini, Nachmias, and Peres
\cite{bnp} for tree-like graphs. Let $G$ be vertex-transitive with degree $d+1$. It is elementary that the percolation
critical point satisfies $\pc\ge 1/d$ (see
\cite[Thm 7]{BH57}), and an
asymptotically equivalent  upper bound for $\pc$
was developed in \cite{bnp} for a certain family of graphs which are (in a certain
sense) locally tree-like.
In recent work of Martineau and Tassion \cite{MT}, a
locality result has been proved for percolation on abelian graphs.
The proof extends the methods and conclusions of \cite{gm}, 
where it is proved that the slab critical points converge to 
$\pc(\ZZ^d)$, in the limit as the slabs become infinitely `fat'.
(A related result for connective constants is included here at Example \ref{ex:torus}.)

We are unaware of a locality theorem for the critical temperature $\Tc$
of the Ising model. Of the potentially relevant work on the Ising model  to
date, we mention \cite{Bod,ch, Dob70, DobS, zl, Wei}.

This paper is organized as follows. Relevant background and notation is
described in Section \ref{sec:not}.
The concept of a \ghf\  is presented in 
Section \ref{sec:sec2},
where examples are included of infinite graphs with \ghf s.
Bridges and the bridge constant are defined in Section \ref{sec:saws}, 
and it is proved in Theorem \ref{thm1} that
the bridge constant equals the connective constant whenever there
exists a \ughf. The main \lq locality theorem' is given at 
Theorem \ref{thm2}.  Theorem \ref{thm4} is an application of
the locality theorem in the context of a sequence of quotient graphs;
this parallels the Grimmett--Marstrand theorem
\cite{gm} for percolation on (periodic) slabs, but with the underlying lattice replaced by a  
transitive graph with a \ughf.
Sections \ref{sec:pf1} and \ref{sec:pf1b} contain the proofs of Theorem \ref{thm1}.

\section{Notation and background}\label{sec:not}

The graphs $G=(V,E)$ considered here are generally assumed
to be infinite, connected, locally finite, undirected, and also
simple, in that they have neither loops nor multiple edges.
An  edge $e$ with endpoints $u$, $v$ is 
written as $e=\langle u,v \rangle$. 
If $\langle  u,v \rangle \in E$, we call $u$ and $v$ \emph{adjacent},
and we write $u \sim v$. The set of neighbours of $v$ is written as 
$\pd v=\{u\in V: \langle u,v\rangle \in E\}$. 

Loops and multiple edges have been excluded for 
cosmetic reasons only.
A SAW can traverse no loop, and thus loops may 
be removed without changing the connective constant.
The same proofs are valid in the presence of multiple edges.
When there are multiple edges, we are effectively
considering SAWs
on a weighted simple graph, and indeed our results are valid
for edge-weighted graphs with strictly positive weights, and for
counts of SAWs in which the contribution of a given SAW is
the product of the weights of the edges therein. 

The \emph{degree} of vertex $v$ is the number of edges
incident to $v$, denoted $\deg_G(v)$ or $\deg (v)$,
and $G=(V,E)$ is called \emph{locally finite} if
every vertex-degree is finite.  
The maximum vertex-degree is denoted 
$\de_G=\sup\{\deg_G(v): v \in V\}$.
The \emph{graph-distance} between two vertices $u$, $v$ is the number of edges
in the shortest path from $u$ to $v$, denoted $d_G(u,v)$.
We denote by $S_k(v)=S_k(v,G)$ the ball of $G$ with 
centre $v$ and radius $k$.

The automorphism group of the graph $G=(V,E)$ is
denoted $\Aut(G)$. A subgroup $\Ga \le \Aut(G)$ is said to \emph{act
transitively} on $G$  (or on its vertex-set $V$)
if, for $v,w\in V$, there exists $\g \in \Ga$ with $\g v=w$.
It is said to  \emph{act quasi-transitively} if there is a finite
set $W$ of vertices 
such that, for $v \in V$, there exist
$w \in W$ and $\g \in \Ga$ with $\g v =w$.
The graph is called \emph{(vertex-)transitive} 
(\resp, \emph{quasi-transitive}) if $\Aut(G)$ acts transitively
(\resp, quasi-transitively). 
For a subgroup $\sH \le \Aut(G)$ and a vertex $v \in V$, the orbit of $v$ under $\sH$ is written 
$\sH v$. The number of such orbits is written
as $M(\sH) =|G/\sH|$.

A \emph{walk} $w$ on $G$ is
an (ordered) alternating sequence $(w_0,e_0,w_1,e_1,\dots, e_{n-1}, w_n)$ of vertices $w_i$
and edges $e_i=\langle w_i, w_{i+1}\rangle$, with $n \ge 0$.
We write $|w|=n$ for the \emph{length} of $w$, that is, the number of edges in $w$.
The walk $w$ is called \emph{closed} if $w_0=w_n$. We note that $w$ is
directed from $w_0$ to $w_n$.
When, as generally assumed, $G$ is simple, we may abbreviate $w$ to the sequence
$(w_0,w_1,\dots, w_n)$ of vertices visited. 

A \emph{cycle} is a closed walk $w$ traversing three or more distinct edges, and 
satisfying $w_i\ne w_j$ for $1 \le i < j \le n$. Strictly speaking,
cycles (thus defined) have orientations derived from the underlying walk, and for
this reason we may refer to them sometimes as \emph{directed}  cycles of $G$.

An \emph{$n$-step self-avoiding walk} (SAW) 
on $G$ is  a walk containing $n$ edges
no vertex of which appears more than once.
Let $\Si_n(v)$ be the set of $n$-step SAWs starting at $v$, with
cardinality $\si_n(v):=|\Si_n(v)|$, and let
\begin{equation}\label{eq:defssup}
\si_n=\si_n(G) := \sup\{\si_n(v): v \in V\}.
\end{equation}
We have in the usual way (see \cite{hm,ms}) that
\begin{equation}\label{eq:sisub}
\si_{m+n} \le \si_m\si_n,
\end{equation}
whence the \emph{connective constant}
$$
\mu=\mu(G) :=\lim_{n\to\oo} \si_n^{1/n}
$$
exists, and furthermore
\begin{equation}\label{eq:silb}
\si_n \ge \mu^n, \qq n\ge 0.
\end{equation}
Hammersley \cite{jmhII} proved that, if $G$ is quasi-transitive, 
\begin{equation}\label{connconst}
\lim_{n \to \oo} \si_n(v)^{1/n} = \mu, \qq v \in V.
\end{equation}

We select a vertex of $G$ and call it the \emph{identity} or \emph{origin}, denoted $\id$.
Further notation concerning SAWs will be introduced when needed. The concept of a
\lq \ghf'  is explained in the next section, and that
leads  in Section \ref{sec:saws} to the definition of a \lq bridge'.

Let $\Ga\le \Aut(G)$ act transitively, and let $\sH\le\sA$.
We define the (simple) \emph{quotient graph} 
$G/\sH=(\olV,\olE)$ as follows.   The vertex-set $\olV$  comprises the orbits $\ol v := \sAz v$ 
as $v$ ranges over $V$. For $v,w \in V$, 
we place an edge between $\ol v$ and $\ol w$ if and only if $\pd v\cap \ol w \ne \es$
(if $\ol v = \ol w$, such an edge is a loop).  
Further detaills of quotient graphs may be found in, for example, \cite[Sect.\ 3.4]{GrL3}.

The set of integers is written as $\ZZ=\{\dots,-1,0,-1,\dots\}$, 
the natural numbers as
$\NN=\{1,2,3\dots\}$, and the rationals as $\QQ$.

\section{Quasi-transitive graphs and \ghf s}\label{sec:sec2}

Let $G=(V,E)$ be
an infinite, connected, quasi-transitive, locally finite, simple graph. 

\begin{definition} \label{def:height}
A \emph{\ghf} on $G$ is a pair $(h,\sH)$ such that:
\begin{letlist}
\item $h:V \to\ZZ$, and $h(\id)=0$, 
\item $\sH$ is a subgroup of $\Aut(G)$ acting quasi-transitively on $G$ 
such that $h$ is \emph{\hdi} in the sense that
$$
h(\a v) - h(\a u) = h(v) - h(u), \qq \a \in \sH,\ u,v \in V,
$$
\item for  $v\in V$,
there exist $u,w \in \pd v$ such that
$h(u) < h(v) < h(w)$.
\end{letlist}
A \emph{\ughf} is a \ghf\ $(h,\sH)$ with the action of
$\sH$ being unimodular.
\end{definition}

The expression `\ghf' is in contrast to the `\GHF' of \cite{GL-Cayley}. It is
explained in \cite{GL-Cayley} that a \GHF\ of a finitely 
generated group is a (unimodular) \ghf\ 
on its Cayley graph, but not necessarily \emph{vice versa}.

We remind the reader of the definition of unimodularity.
Let $G=(V,E)$ be an infinite graph and $\sH \le \Aut(G)$.
The ($\sH$-)\emph{stabilizer} 
 $\Stab_v$ ($=\Stab_v^\sH$) of $v\in  V$ is the set of $\g\in \sH$ for which $\g(v) = v$.
The group $\sH$ is said to \emph{act freely} if $\Stab_v$ contains only
the identity map.
The action of $\sH$ (or the group $\sH$ itself)
is called \emph{unimodular} if and only if 
\begin{equation}\label{g804}
|\Stab_u v| = |\Stab_v u|, \qquad v\in V,\ u \in \sH v.
\end{equation} 
Further details of unimodularity may be found in \cite[Chap.\ 8]{LyP}. 
The assumption of unimodularity is necessary
in the Bridge Theorem \ref{thm1} (see Remark \ref{ex:nec-tree}). 

Associated with a \ghf\ $(h,\sH)$
are two integers $d$, $r$ which 
will play roles in the following sections and which we define next.
Let
\begin{equation}\label{eq:defd}
d=d(h)=\max\bigl\{|h(u)-h(v)|: u,v\in V,\ u \sim v\bigr\}.
\end{equation}

If $\sH$ acts transitively, we set
$r=0$. Assume $\sH$ does not act transitively, and 
let $r=r(h,\sH)$ be the infimum of all $r$ such that the following holds.
Let $o_1,o_2,\dots,o_M$ be representatives of the orbits of $\sH$.
For $i\ne j$, there
exists $v_j \in \sH o_j$ 
such that $h(o_i)<h(v_j)$, and a SAW $\nu(o_i,v_j)$ from $o_i$
to $v_j$, with length $r$ or less, all of whose vertices $x$, other than
its endvertices,  satisfy $h(o_i)<h(x)< h(v_j)$.
We fix such a SAW, and denote it as above. We set $\nu(o_i,o_i)=\{o_i\}$.
Such SAWs will be used in Section \ref{sec:pf1b}. 
The following proposition is proved at the end of this section.

\begin{proposition}\label{lem:rfin}
Let $(h,\sH)$ be a \ghf\  on the graph $G$. Then
$r=r(h,\sH)$ satisfies
\begin{gather}
0\le r \le (M-1)(2d+1)+2,\label{new41}\\
M \le |S_r(\id,G)| \le \frac{\de_G^{r+1}-1}{\de_G-1},
\label{new42}
\end{gather}
where $M=|G/\sH|$ and $d$ is given by \eqref{eq:defd}.
\end{proposition}

Not every quasi-transitive graph has a \ghf. For example, condition (c), above, fails
if $G$ has a cut-vertex whose removal breaks $G$ into an infinite and a finite part.
We do not have a useful necessary and sufficient 
condition for the existence of a \ghf. 

\begin{remark}\label{rem1}
There exist infinite Cayley graphs that support no \ghf.
Examples are provided in the paper \cite{GL-amen}, which postdates
the current work.
\end{remark}

Here are several examples of transitive graphs with \ghf s.
\begin{letlist}
\item  The \emph{hypercubic lattice} $\ZZ^n$ 
with, say,  $h(x_1,x_2,\dots,x_n) = x_1$. With $\sH$ the set 
of translations of $\ZZ^n$, we have $d(h)=1$ and $r(h)=0$.

\item
The \emph{$2n$-regular tree} $T$ with $n \ge 1$ is the Cayley graph
of the free group $\Ga$ with $n$ generators $a_1, a_2, \dots, a_n$.
Each vertex $v$ is labelled as a product of the form 
$a_{i_1}^{j_1}a_{i_2}^{j_2}\cdots a_{i_m}^{j_m}$, where 
$j_r \in \{-1,1\}$.  We set $h(v)$ equal to the sum of the $j_r$ such that
$i_r=1$, and let $\sH$ be the action of $\Ga$ by left-multiplication.
Since $\Ga$ acts freely, it is unimodular. We have $d=1$ and $r=0$.

A similar construction applies to the $n$-regular tree with $n\ge 3$ odd, by
choosing $a_1$ to be a generator with infinite order.  

\begin{figure}[h]
\centering
\includegraphics[width=0.5\textwidth]{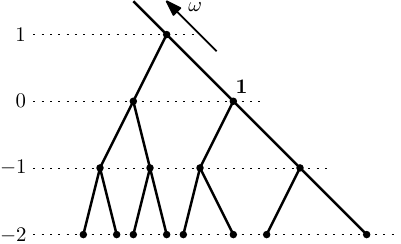}
\caption{The $3$-regular tree with the \lq horocyclic' height
function.}\label{fig:tree}
\end{figure}

Let $m \ge 3$.
The $m$-regular tree $T$ possesses also a \emph{non-unimodular} \ghf,
as follows. Let $\om$ be a ray of $T$, 
and `suspend' $T$ from $\om$ as in Figure \ref{fig:tree}. A given vertex
on $\omega$  is designated as identity $\id$ and is
given height $0$, and other vertices have `horocyclic' heights as indicated
in the figure.
The set $\sH$ is the subgroup of automorphisms that fix 
the end of $T$ determined by $\om$,
and $\sH$ acts transitively but is not unimodular
(see \cite{LyP,Trof}). 
We have $d=1$ and $r=0$.

Using the last pair $T$, $\sH$, we may construct a graph possessing  a \ghf\ but no \ughf.
Consider the `grandparent' graph $G$ derived from $T$ by adding an edge 
between each vertex and its grandparent in the direction of $\om$ (see \cite{Trof}).
Its automorphism group may be taken as $\sH$. Suppose $G$ has a \ughf\ $(h,\sA)$.
By \cite[Cor.\ 8.11, Prop.\ 8.12]{LyP} and the fact that $\sA \le \sH$, we have that
$\sH$ is unimodular, which is not true. 

\begin{remark}\label{rem:nonunim}
More generally, if $\sA$ is a quasi-transitive, unimodular group of
automorphisms on a graph $G$, and $\sA\le\sH\le\Aut(G)$, then
$\sH$ is unimodular. In particular, if $\Aut(G)$ is non-unimodular,
then $G$ has no \ughf.
\end{remark} 

\item There follow three examples
of Cayley graphs of finitely presented groups
(readers are referred to \cite{GL-Cayley} for further information
on Cayley graphs). 
The \emph{discrete Heisenberg group}
\begin{equation*}
\Ga=\left\{\left(\begin{array}{ccc}1&x&z\\0&1&y\\0&0&1\end{array}\right):x,y,z\in\ZZ\right\},
\end{equation*}
has 
generator set $S=\{s_1,s_2,s_3,s_1',s_2',s_3'\}$ where
\begin{equation*}
s_1=\left(\begin{array}{ccc}1&1&0\\0&1&0\\0&0&1\end{array}\right),\q s_2=\left(\begin{array}{ccc}1&0&0\\0&1&1\\0&0&1\end{array}\right),\q s_3=\left(\begin{array}{ccc}1&0&1\\0&1&0\\0&0&1\end{array}\right),
\end{equation*}
and relator set  
$$
R=\{s_1s_1', s_2s_2', s_3s_3'\}\cup
\{s_1s_2s_1's_2's_3',s_1s_3s_1's_3',
s_2s_3s_2's_3'\}.
$$ 
Consider its Cayley graph.
To a directed edge of the form $[ v,vs_1\rangle$
(\resp, $[v, vs_1'\rangle$)
we associate the height difference $1$ (\resp, $-1$),
and to all other edges height difference $0$. 
 The height  $h(v)$ of vertex $v$
is given by adding the height differences along any
directed path
from the identity $\id$ to $v$, which is to say that
$$
h\left[\left(\begin{array}{ccc}1&x&z\\0&1&y\\0&0&1\end{array}\right)\right]
=x.
$$
The function $h$ is well defined because the sum of the height differences around any cycle
arising from a relator is $0$.
We take $\sH$ to be the Heisenberg group, acting by left-multiplication, 
and we have $d=1$ and $r=0$.

\item The \emph{square/octagon lattice} of Figure \ref{fig:so}  
is the Cayley graph of the group with generators $s_1$, $s_2$, $s_3$
and relators $\{s_1^2,s_2^2,s_3^2,s_1s_2s_1s_2, s_1s_3s_2s_3s_1s_3s_2s_3\}$.
It has no \ghf\  with $\sH$ acting \emph{transitively}. 
There are numerous ways to define a height
function with quasi-transitive $\sH$, of which we mention one.
Let $\sH$ be the automorphism subgroup generated by the
shifts that map $\id$ to $\id'$ and $\id''$, \resp, and let the \ghf\    
be as in the figure.
Since $\sH$ acts freely, it is unimodular. We have $d=1$ and $r=6$.

\begin{figure}[h]
\centerline{\includegraphics[width=0.5\textwidth]{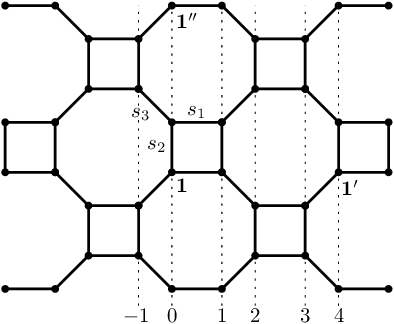}}
\caption{The square/octagon lattice. The subgroup $\sH$ is 
generated by the shifts $\tau'$, $\tau''$ that map $\id$ to $\id'$ and
$\id''$, \resp, and the heights of vertices are as marked.}
\label{fig:so}
\end{figure}

\item
The \emph{hexagonal lattice} of Figure \ref{fig:hcl}
is the Cayley graph of a finitely presented group.
It possesses a \ughf\  $h$ with $\sH$ acting quasi-transitively, 
as follows. Let $\sH$ be the set of automorphisms of
the lattice that act by translation of the figure, and let
the heights be as 
given in the figure. 
We have $d=1$ and $r=1$.

\begin{figure}[htbp]
\centering
\includegraphics[width=0.55\textwidth]{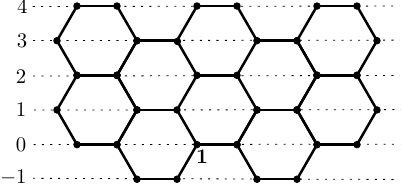}
\caption{The hexagonal lattice. The heights of vertices are 
as marked.}\label{fig:hcl}
\end{figure}

\item The \emph{Diestel--Leader graphs} DL$(m,n)$ with $m,n\ge 2$ and 
$m \ne n$ were proposed in \cite{DL}
as candidates for transitive
graphs that are quasi-isometric to no Cayley graph,
and this conjecture was proved in \cite{AFW}
(see \cite{Dun11} for a   further example).
They arise through a certain combination (details of which are omitted here)
of an $(m+1)$-regular tree and an $(n+1)$-regular tree.
The horocyclic \ghf\ of either tree provides a 
\ghf\ for the combination, which
is unimodular if and only if $m=n$.  
When  $m\ne n$, by Remark \ref{rem:nonunim},
there exists no unimodular \ghf. 
\end{letlist}

\begin{proof}[Proof of Proposition \ref{lem:rfin}]
Let $(h,\sH)$ be as in Definition \ref{def:height}, and assume that $\sH$ acts quasi-transitively but 
not transitively. For $v,w\in V$, we write $v \to w$ if there exist $v'\in \sH v$,
$w'\in \sH w$ such that (i) $h(v')<h(w')$,
and (ii) there is a SAW $\nu=(\nu_0,\nu_1,\dots, \nu_m)$ with $\nu_0= v'$, $\nu_m=w'$, and
$h(v')<h(\nu_j) < h(w')$ for $1\le j < m$.
We prove next that $v \to w$ for all pairs $v$, $w$ lying in distinct orbits of $V$.
Since $\sH$ has only finitely many orbits, this will
imply that $r(h,\sH)<\oo$.

Let $u\in V$,  and let $T_u$ be a sub-tree of $G$ containing $u$ and
exactly one representative of each orbit of $\sH$. (The tree $T_u$
may be obtained as a lift of a spanning tree of the quotient graph 
$G/\sH$.) With
$M=|G/\sH|$, the tree $T_u$ has $M-1$ edges.
Let
$$
\De_u=\max\bigl\{|h(a)-h(b)|: a,b \in V(T_u)\bigr\},
$$
where $V(T_u)$ is the vertex-set of $T_u$. By \eqref{eq:defd}, 
\begin{equation}\label{eq:Dupper}
|\De_u| \le (M-1)d, \qq u \in V.
\end{equation}
 
By Definition \ref{def:height}(c), for $v \in V$, we may pick a doubly 
infinite SAW $\pi(v)
=(\pi_j(v): j \in \ZZ)$
with $\pi_0(v)=v$, such that $h(\pi_j(v))$ is strictly 
increasing in $j$. Since $h$ takes integer values,
\begin{equation}\label{eq:eps}
h(\pi_{j+1}(v))-h(\pi_j(v)) \ge 1, \qq j\in \ZZ,\ v \in V.
\end{equation}

Let $v,w \in V$ be in distinct orbits of $\sH$. Let $v'=\pi_R(v)$
and $w'=\pi_{-R}(w)$ where $R\ge 1$ will be chosen soon. 
Find $\a\in\sH$ such that
$\a v' \in V(T_{w'})$. Let $\nu$ be the walk obtained
by following the sub-SAW of
$\a\pi(v)$ from $\a v$ to $\a v'$, followed
by the sub-path of $T_{w'}$ from $\a v'$ to $w'$,
followed by the sub-SAW of $\pi(w)$ from $w'$ to
$w$. The length of $\nu$ is at most  $2R+M-1$.

By \eqref{eq:eps}, we can pick $R$ sufficiently large that
\begin{align*}
h(\a v) &< \min\{h(a): a \in V(T_{w'})\}\\ 
&\le h(\a v')\q \text{(\resp, $h(w')$)}\\
&\le \max\{h(a): a \in V(T_{w'})\} < h(w),
\end{align*}
and indeed, by \eqref{eq:Dupper}, it suffices that $R = (M-1)d+1$.
By loop-erasure of $\nu$, we obtain a SAW 
$\nu'=(\nu'_0,\nu_1',\dots,\nu'_m)$ with $\nu'_0=\a v$, $\nu'_m=w$, 
\begin{equation}\label{eq:bound}
m \le 2R+M-1 \le 2(M-1)d+2 +(M-1),
\end{equation}
and
$h(\nu_0') < h(\nu'_j) <h(\nu'_m)$
for $1\le j < m$. Therefore, $v \to w$ as required.
The bound \eqref{new41} follows from \eqref{eq:bound}.

Inequality \eqref{new42} is a consequence of the definition of $r(h,\sH)$.
\end{proof}

\section{Bridges and the bridge constant}\label{sec:saws}

Assume that $G$ is quasi-transitive with \ghf\  $(h,\sH)$. 
The forthcoming definitions depend on the choice of 
pair  $(h,\sH)$.

Let $v \in V$ and $\pi=(\pi_0,\pi_1,\dots,\pi_n)\in\Si_n(v)$.
We call $\pi$ a \emph{half-space SAW} if 
$$
h(\pi_0)<h(\pi_i), \qq 1\leq i\leq n,
$$
and we write $c_n(v)$ for the number of half-space walks
with initial vertex $v$.
We call $\pi$ a \emph{bridge} if %$\pi_n \in \sH\id$ and
\begin{equation}\label{eq:bridge}
h(\pi_0)<h(\pi_i) \leq h(\pi_n), \qq 1\leq i\leq n,
\end{equation}
and a \emph{reversed bridge} if \eqref{eq:bridge} is replaced by
$$
h(\pi_n)\le h(\pi_i) < h(\pi_0), \qq 1\leq i\leq n.
$$
 
The \emph{span} of a SAW $\pi$ is defined as 
$$
\spz(\pi) = \max_{0\leq i\leq n}h(\pi_i)-\min_{0\leq i\leq n}h(\pi_i).
$$
The number of $n$-step bridges from $v$ with span $s$ is denoted 
$b_{n,s}(v)$, and in addition
$$
b_n(v) = \sum_{s=0}^\oo b_{n,s}(v)
$$
is the total number of $n$-step bridges from $v$. Let
\begin{equation}\label{eq:defbinf}
b_n =b_n(G):= \min\{b_n(v): v \in V\}.
\end{equation}
It is easily seen (as in \cite{HW62}) that
\begin{equation}\label{eq:b-subadditive}
b_{m+n} \ge b_m b_n,
\end{equation}
from which we deduce the existence of the \emph{bridge constant}
\begin{equation}\label{eq:bexists}
\be  = \be(G) = \lim_{n\to\oo} b_n^{1/n}
\end{equation}
satisfying
\begin{equation}\label{eq:bless}
b_n \le \be^n, \qq n \ge 0.
\end{equation}

\begin{remark}\label{rem:hdep}
The bridge constant $\be$ depends on the choice
of \ghf. We shall see in Theorem \ref{thm1}
that its value is constant across the set of \emph{unimodular} \ghf s. 
\end{remark}

\begin{proposition}\label{prop:mequalsb}
Let  $G=(V,E)$ be an infinite, connected, quasi-transitive,
locally finite, simple graph possessing a \ghf\ $(h,\sH)$. Then
\begin{equation}\label{eq:bvconv}
b_n(v)^{1/n} \to \be, \qq v \in V,
\end{equation}
and furthermore 
\begin{equation}\label{eq:bnabove}
b_n(v) \le \be^{n+r}, \qq n\ge 1,\ v \in V,
\end{equation}
where $r=r(h,\sH)$ is given after \eqref{eq:defd}.
\end{proposition}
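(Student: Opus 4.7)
The plan is to reduce both assertions to the upper bound $b_n(v)\le \be^{n+r}$; combined with the trivial lower bound $b_n(v)\ge b_n$ from \eqref{eq:defbinf}, this yields \eqref{eq:bvconv} by sandwiching, since $b_n^{1/n}\to \be$ by \eqref{eq:bexists}.

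Fix $v\in V$ and $n\ge 1$. Because $\sH$ has only finitely many orbits on $V$, the infimum defining $b_{n+r}$ is attained on some orbit $O^*$. The strategy is to construct an injection from the set of $n$-step bridges starting at $v$ into the set of $(n+r)$-step bridges starting at some vertex $u^*\in O^*$. Since $b_{n+r}(\cdot)$ is constant on $O^*$ and equal to $b_{n+r}$ there, such an injection together with \eqref{eq:bless} yields
$$
b_n(v) \le b_{n+r}(u^*) = b_{n+r} \le \be^{n+r}.
$$

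The construction splits into two cases. If $v\in O^*$, I simply append to each $n$-step bridge out of $v$ a fixed upward SAW of length $r$ from its endpoint; such a SAW is built by iterating Definition \ref{def:height}(c), and the strict height increase along it guarantees both self-avoidance with the original bridge and the bridge property of the concatenation. If $v\notin O^*$, I first apply the definition of $r(h,\sH)$ to the pair of distinct orbits $\{O^*,\sH v\}$, with $O^*$ playing the role of the ``low'' orbit, to obtain representatives $u^*\in O^*$ and $v''\in\sH v$ with $h(u^*)<h(v'')$ together with a SAW $\nu$ from $u^*$ to $v''$ of length at most $r$ whose internal heights lie strictly between $h(u^*)$ and $h(v'')$. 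For each bridge $\pi$ starting at $v$, I translate $\pi$ by the unique $\a\in\sH$ mapping $v$ to $v''$, prepend $\nu$ to $\a\pi$, and pad with an upward SAW of length $r-|\nu|$ from the resulting endpoint, obtaining $\tilde\pi$ of length exactly $n+r$ starting at $u^*$. The three segments of $\tilde\pi$ occupy essentially disjoint height ranges (sharing only the two gluing vertices), which delivers both self-avoidance and the bridge property; injectivity is clear since $\pi$ is recovered from the middle segment of $\tilde\pi$ by applying $\a^{-1}$.

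The main obstacle I anticipate is verifying that the short bridge $\nu$ can be chosen to \emph{terminate} in $\sH v$ rather than start there, so that the prepending step is legal. This is resolved by a careful reading of the definition of $r(h,\sH)$: its universal quantifier runs over all ordered pairs of orbits, so by labelling the orbit of ``$u$'' as $O^*$ and that of ``$v$'' as the orbit of the fixed $v$, the definition directly supplies a bridge in the required direction. The remaining verifications---self-avoidance, the bridge condition for the concatenated path, and injectivity of $\pi\mapsto\tilde\pi$---are routine bookkeeping with the height function.
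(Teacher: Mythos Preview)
Your proposal is correct and follows essentially the same route as the paper: prepend the short ``connecting'' SAW $\nu$ supplied by the definition of $r(h,\sH)$ to pass from the orbit realizing the infimum to the orbit of $v$, then use monotonicity of $m\mapsto b_m(\cdot)$ (which you implement explicitly by padding with a strictly ascending tail) together with \eqref{eq:bless}. The paper's write-up is terser---it records the chain $b_n(v)\le b_{n+l}(x)\le b_{n+r}(x)=b_{n+r}$ and leaves the monotonicity step implicit---but the underlying construction is the same. Two cosmetic points: the automorphism $\a\in\sH$ with $\a v=v''$ need not be \emph{unique} (stabilizers can be nontrivial), so simply fix one; and the ``fixed'' upward tail in Case~1 necessarily depends on the endpoint of $\pi$, but since you recover $\pi$ as the first $n$ steps this does not harm injectivity.
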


\begin{theorem}[Bridge theorem]\label{thm1}
Let $G=(V,E)$ be an infinite, connected, quasi-transitive,
locally finite, simple graph possessing a \ughf\ $(h,\sH)$. Then $\be = \mu$.
\end{theorem}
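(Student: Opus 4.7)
One direction is immediate: every bridge is a SAW, so $b_n(v)\le\si_n(v)\le\si_n$ for every $v$, and combining \eqref{eq:bvconv} with \eqref{connconst} yields $\be\le\mu$.

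For the reverse direction $\mu\le\be$, I adapt the Hammersley--Welsh bridge decomposition \cite{HW62} to the quasi-transitive setting. First, I bound the number $c_n(v)$ of half-space SAWs of length $n$ from $v$. Given such a SAW $\pi$, I decompose it iteratively by peeling off, at each stage, the sub-walk from the current endpoint to the last-attained extremum of $h$ on the remainder: this produces alternating bridges and reversed bridges $\pi^{(1)},\dots,\pi^{(k)}$ whose spans $s_1>s_2>\cdots>s_k\ge1$ are strictly decreasing distinct positive integers. Because every edge changes $h$ by at most $d=d(h)$, we have $\sum_i s_i\le dn$, whence $k(k+1)/2\le dn$ and so $k\le\sqrt{2dn}$. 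Applying Proposition~\ref{prop:mequalsb} uniformly to each bridge piece, and the analogous bound (obtained by applying the same proposition to the \ghf\ $(-h,\sH)$) to each reversed-bridge piece, combined with the $\binom{n-1}{k-1}$ count of ordered length-compositions of $n$ into $k$ positive parts, yields
$$
c_n(v)\le\sum_{k=1}^{\lfloor\sqrt{2dn}\rfloor}\binom{n-1}{k-1}\be^{n+kr}\le e^{C\sqrt n}\be^n
$$
for a constant $C=C(h,\sH)$ independent of $v$. To pass from $c_n$ to $\si_n$, I split any SAW $\pi$ from $v$ at its last minimum-height vertex $\pi_j$: the forward piece $(\pi_j,\dots,\pi_n)$ is a half-space SAW from $\pi_j$, and the backward piece $(\pi_j,\pi_{j-1},\dots,\pi_0)$ is a weakly half-space SAW, which reduces to finitely many genuinely half-space pieces after splitting at intermediate visits to height $h(\pi_j)$. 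Summing and applying the previous bound gives $\si_n(v)\le e^{C'\sqrt n}\be^n$, and taking $n$-th roots as $n\to\oo$ yields $\mu\le\be$.

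The principal technical obstacle is the treatment of reversed bridges. While Proposition~\ref{prop:mequalsb} applied to $(h,\sH)$ gives a uniform $\be^{n+r}$ bound on bridge counts, the analogous bound for reversed bridges comes a priori with a possibly different constant $\wt\be$ arising from the \ghf\ $(-h,\sH)$; a reversed bridge of $h$ from $v$ is a bridge of $-h$ from $v$, equivalently a bridge of $h$ \emph{ending} at $v$, and these are not directly controlled by $b_n(v)$. Identifying $\wt\be$ with $\be$---or alternatively modifying the decomposition to avoid reversed bridges altogether---is needed to conclude, and should follow from reversal symmetry of SAWs together with $\sH$-quasi-transitivity, though making this rigorous without ambient lattice symmetries is the crux of the proof. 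A secondary issue is the weakly-half-space walk arising in the reduction from $\si_n$ to $c_n$, which requires careful tie-breaking at vertices of minimum height.
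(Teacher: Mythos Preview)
You have correctly identified both the overall structure (the Hammersley--Welsh bridge decomposition with strictly decreasing spans) and the genuine obstacle (the reversed-bridge pieces). Your first proposed workaround, bounding reversed bridges via the bridge constant $\wt\be$ of the height function $-h$, is indeed circular: $\wt\be=\mu$ is precisely the theorem applied to $-h$, and there is no evident direct route to $\wt\be=\be$. Reversal of a bridge for $h$ yields a walk whose strict and weak inequalities are interchanged and whose \emph{starting} vertex has moved, so the infima defining $\be$ and $\wt\be$ are not directly comparable.

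The paper takes your second suggested route, ``modifying the decomposition to avoid reversed bridges altogether'', and does so by \emph{surgery}. Given a half-space SAW $\pi\in B_n(a_1,\dots,a_k)$ with $k\ge2$, one does not bound the pieces separately; instead one unfolds $\pi$ into a single bridge. Concretely (Proposition~\ref{hsb}): locate the first point $\pi_m$ after $\pi_{n_1}$ at which $h$ returns to the level $S_1-S_2$; use Proposition~\ref{lem:rfin} to find $\a\in\sH$ and a connecting SAW of length at most $r$ from $\pi_{n_1}$ to $\a\pi_m$ lying strictly between their heights; splice in this connector together with the translate $\a\nu_1$ of the intervening sub-SAW. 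A second splice re-attaches the remainder $\a'\nu_2$, and the result lies in $B_{n+\si}(a_1+a_2+a_3+\de,a_4,\dots,a_k)$ for some $\si\le 2r$ and $\de\ge 0$. Iteration collapses everything to one bridge of length at most $n+kr$; since $k\le\sqrt{2dn}$, this gives $c_n\le dn\,e^{A\sqrt n}\be^n$ with no reference to $\wt\be$. The map is at most $r^k$-to-one (bounded choice of connector at each splice), and this factor is absorbed into the subexponential term. This unfolding via $\sH$-translates and the short connectors of Proposition~\ref{lem:rfin} is the missing idea in your argument.

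Your reduction from $\si_n$ to $c_n$ also needs repair. The backward piece after splitting at the last minimum may revisit the minimum height arbitrarily often, and even at consecutive vertices (Definition~\ref{def:height} does not forbid neighbours of equal height), so your further splitting neither bounds the number of pieces nor produces half-space SAWs. The paper again uses Proposition~\ref{lem:rfin}: choose $\id'\in\sH\id$ with $h(\id')<h(\pi_m)$ and a connecting SAW $\nu(\id',\pi_m)$ of length at most $r$ whose interior lies strictly between the two heights; prepending $\nu$ to each of the two pieces converts both into genuine half-space SAWs from $\id'$, yielding $\si_n\le\sum_{m} U_{m+r}U_{n-m+r}$ with $U_s=ds\,e^{A\sqrt s}\be^s$.
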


This theorem extends that of Hammersley and Welsh \cite{HW62}
for $\ZZ^d$, and has as corollary that the value of the bridge constant
is independent of the choice of pair $(h,\sH)$.
The proof of the theorem is deferred to Sections \ref{sec:pf1} and \ref{sec:pf1b}.

\begin{remark}\label{ex:nec-tree}
Here is an example of the necessity of unimodularity in 
Theorem \ref{thm1}. The $m$-regular tree $T$ of
Figure \ref{fig:tree} possesses a \ughf\ $h$, and also a non-unimodular, horocyclic
\ghf\ $h'$. The number of $n$-step bridges relative to $h'$ is exactly $1$,
so that $\be=1$ using $h'$. On the other hand, $\mu=2$.
\end{remark}

\begin{remark}\label{rem:an}
It is proved in \cite{GrL3} that, in certain situations,  the quotienting of a graph $G$
by a non-trivial subgroup of its automorphism group
leads to strict reduction in the value of its connective constant, and the
question is posed there of whether one can establish a 
concrete lower bound on the magnitude of the change
in value. It is proved
in \cite[Thm 3.11]{GrL3} that this can be done whenever
there exists a real sequence $(a_n)$ satisfying $a_n \uparrow \mu(G)$,
each element of which can be calculated in finite time. For
any transitive graph $G$ satisfying the hypothesis of Theorem
\ref{thm1}, we may take $a_n=b_n^{1/n}$.
\end{remark}

\begin{proof}[Proof of Proposition \ref{prop:mequalsb}]
Assume $G$ has \ghf\  $(h, \sH)$. If $G$ is transitive, the claim is trivial, 
so we assume $G$ is quasi-transitive but not transitive.
For $v,w \in V$ with $w \notin \sH v$,
let $\nu(v,w)$ be a SAW from $v$ to some $w'\in \sH w$ with $h(v)<h(w')$, every vertex $x$ of which,
other than its endvertices, satisfies $h(v) < h(x) < h(w')$. 
We may assume that the length $l(v,w)$ of $\nu(v,w)$
satisfies $l(v,w)\le r$ for all such pairs $v$, $w$.

Choose $x\in V$ such that $b_{n+r}(x)=b_{n+r}$. Let $l=l(x,v)$ if $x \notin \sH v$,
and $l=0$ otherwise.
Since $b_m(x)$ is non-decreasing in $m$, and $l \le r$, 
$$
b_n(v) \le b_{n+l}(x) \le b_{n+r}(x) =b_{n+r},
$$
and \eqref{eq:bnabove} follows by \eqref{eq:bless}.
The limit \eqref{eq:bvconv} follows by 
\eqref{eq:defbinf} and \eqref{eq:bexists}.
\end{proof}

\section{Locality of connective constants}\label{sec:loc}

Let $\sG$ be the class of infinite, connected, quasi-transitive, locally finite, simple, rooted graphs.
For $G \in \sG$, we label the root as $\id=\id_G$ and call it the 
\emph{identity} or \emph{origin} of $G$. 
The \emph{ball} $S_k(v)=S_k(v,G)$, with centre $v$ and radius $k$, is
the subgraph of $G$ induced by the set of its vertices within
graph-distance $k$ of $v$. For $G,G'\in \sG$, we write
$S_k(v,G) \simeq S_k(v',G')$ if there exists a  graph-isomorphism from $S_k(v,G)$ to
$S_k(v',G')$ that maps $v$ to $v'$.  We define the \emph{similarity}
of $G,G' \in \sG$ by
$$
K(G,G') = \max\bigl\{k: S_k(\id_G,G) \simeq S_k(\id_{G'},G')\bigr\}, \qq G,G' \in \sG,
$$
and the  distance-function $d(G,G') = 2^{-K(G,G')}$.
Thus $d$ defines a metric on $\sG$ quotiented by graph-isomorphism,
and this metric space was introduced by Babai \cite{Bab};
see also \cite{BenS01,DL}.

For integers $D\ge 1$ and $R\ge 0$, let $\sG_{D,R}$ be the set of
all $G\in\sG$ which possess a \ughf\  
$h$ satisfying $d(h)\le D$
and $r(h,\sH) \le R$. For a quasi-transitive graph $G$, we write 
$M(G)=|G/\Aut(G)|$ for the number of orbits under its automorphism group.
The locality theorem for quasi-transitive graphs follows,
with proof at the end of the section. 
The theorem may be regarded as a partial resolution of a question
of Benjamini, \cite[Conj.\ 2.3]{Ben13}, which was posed independently of the work reported here.

\begin{theorem}[Locality theorem for connective constants]
\label{thm2}
Let  $G \in \sG$.
Let $D\ge 1$ and $R \ge 0$, and let 
$G_n \in \sG_{D,R}$ for $n \ge 1$. If
$K(G,G_n)  \to \oo$ as $n \to\oo$, then
$\mu(G_n) \to \mu(G)$.
\end{theorem}

The following application of Theorem \ref{thm2} is prompted in part by a result in percolation
theory. Let $\pc(G)$ be the critical probability of either bond or 
site percolation on an infinite graph $G$, and let
$\ZZ^d$ be the $d$-dimensional hypercubic lattice
with $d \ge 3$, and 
$S_k=\ZZ^2\times \{0,1,\dots,k\}^{d-2}$.
It was proved by Grimmett and Marstrand \cite{gm} that
\begin{equation}\label{gm1}
\pc(S_k) \to \pc(\ZZ^d) \qq\text{as } k \to\oo.
\end{equation}
By Theorem \ref{thm2}(c) and the bridge construction of  Hammersley
and Welsh \cite{HW62}, the connective constants satisfy
\begin{equation}\label{gm2}
\mu(\what S_k) \to \mu(\ZZ^d) \qq\text{as } k \to \oo,
\end{equation}
where $\what S_k$ is obtained from $S_k$ by imposing  periodic
boundary conditions in its $d-2$ bounded dimensions. 
Such a limit may be extended as follows to more general situations.
For simplicity, we consider the case of \emph{transitive} graphs only.

Let $G\in\sG$ and let $\Ga$ be a subgroup of $\Aut(G)$ that acts transitively.
Let  $\sA$ be a normal subgroup of
$\Ga$, and assume that $\De(\sA):=d_G\bigl(\id,\sA\id\setminus\{\id\}\bigr)$ 
satisfies $\De \ge 3$. The group $\sA$ gives rise to a (simple) quotient
graph $G/\sA$ (see Section \ref{sec:not}). 
Since $\sA$ is a normal subgroup of $\Ga$,
$\Ga$ acts on $G/\sA$ (see \cite[Remark 3.5]{GrL3}), 
whence $G/\sA$ is transitive.

\begin{theorem}\label{thm4}
Let $G\in\sG$ and $D \ge 1$. Let $\Ga$ act transitively on $G$, and let $\sA_n\normal \Ga$ 
satisfy $\De(\sA_n) \to\oo$ as $n\to\oo$.
Assume  that $G_n:= G/\sA_n \in \sG_{D,0}$ for $n \ge 1$. 
Then $\mu(G_n) \to \mu(G)$ as $n\to\oo$.
\end{theorem}

\begin{proof}
The quotient graph $G_n$ is obtained from $G$ by identifying
any two vertices $v \ne w$ with $w =\a v$ and $\a\in\sA_n$.  
For such $v$, $w$, we have 
$d_G(v,w) \ge \De(\sA_n)$. 
Therefore,
$K(G,G_n) \ge \frac12 \De(\sA_n)-1$,
and the result follows by Theorem \ref{thm2}.
\end{proof}

\begin{example}\label{ex:torus}
Let $G$ be the hypercubic lattice $\ZZ^n$ with $n \ge 2$,
and let $\Ga$ be the group of its translations.
Choose $v=(v_1,v_2,\dots,v_n)\in \ZZ^n$ with $\|v\|:=\max_i |v_i|$
satisfying $\|v\|\ge 3$, and let
$\a_v\in \Ga$ be the translation $w \mapsto w+v$.  Let
$\sP_v$ be the set of non-zero integer vectors perpendicular to $v$,
and, for convenience,  choose $p=(p_1,p_2,\dots,p_n)\in\sP_v$ in such a way 
that $\|p\|$ is a minimum.
For $z\in \ZZ^n$, 
let 
$h_v(z)= z\cdot p$, so that $(h_v,\Ga)$ is a \ghf\  with
$d(h_v) = \|p\|$ and $r(h_v,\Ga)=0$. Since $\Ga$ acts freely, it
is unimodular.

Let $\sA_v$ be the subgroup of $\Ga$ generated 
by $\a_v$ (which is invariably normal). It may be seen that
$(h_v,\Ga/\sA_v)$ is a height function
for $G/\sA_v$ with $d$ and $r$ as above. 
In the notation of Theorem \ref{thm4}, we have that
$\mu(G/\sA_{v_m}) \to \mu(G)$ as $m \to\oo$, 
so long as the sequence $(v_m)$ satisfies
$|v_m|\to\oo$ and
$\limsup_{m\to\oo} d(h_{v_m}) < \oo$.
This may be regarded as a general version of the limit \eqref{gm2}.
\end{example}

We turn to the proof of Theorem \ref{thm2}, and present first a more detailed proposition.

\begin{proposition}
\label{prop2}
Let  $G \in \sG$.
\begin{letlist}
\item  Let $m \ge 1$.
There exists a non-increasing real sequence 
$(\eps_k:k\ge 1)$, depending on $G$ and $m$ only, and satisfying
$0<\eps_k \downarrow 0$ as $k \to \oo$,  such that,
for $G'\in \sG$ with $M(G')\le m$,
\begin{equation}
 \mu(G') \le \mu(G) + \eps_K,\label{ineq1}
 \end{equation}
whenever $K=K(G,G')$ satisfies $K \ge \max\{M(G),m\}$.
 
 \item Let $D,\De\ge 1$ and $R \ge 0$. There exists 
 $B=B(D,R,\De)\in(0,\oo)$ such that, for 
$G' \in \sG_{D,R}$ satisfying $\de_{G'} \le \De$,
 \begin{equation}\label{ineq2}
 \frac{\mu(G)}{f(K-L)}\le \be(G')
 = \mu(G'), \qq \text{if } K=K(G,G') > L,
 \end{equation}
where $f(x)=e^{B/\sqrt x}$ and
$$
L=\max\left\{M(G),\frac{\De^{R+1}-1}{\De-1}\right\}-1.
$$
\end{letlist} 
\end{proposition}

\begin{proof}
Let $G\in\sG$. 
Since the quotient graph $G/\Aut(G)$ is connected,
$G$ has some  subtree $T$ containing $\id$ and 
comprising exactly one member of each orbit under $\Aut(G)$. 
Therefore, 
\begin{equation}\label{eq:some}
\si_n=\si_n(v) \text{ for some }v \in V_T,
\end{equation}
where $V_T$ is the vertex-set of $T$.

(a) Let $G\in\sG$ and $m \ge 1$, and write
\begin{equation}\label{eq:L}
L=\max\{M(G),m\}-1.
\end{equation}
By \eqref{eq:silb}, 
there exist $\eta_k=\eta_k(G)$ such that
$0< \eta_k \downarrow 0$ as $k\to\oo$ and
\begin{equation}
\mu(G)^n \le\si_n(G) \le (\mu(G)+\eta_k)^{n},\qq  n \ge k.
\label{eq:bounds}
\end{equation}
Let $G'\in\sG$ be such that $M(G')\le m$, and write $K=K(G,G')$. 
Since $V_T \subseteq S_L(\id_G,G)$ and $S_K(\id_G,G)\simeq S_K(\id_{G'},G')$,
\begin{equation}\label{eq:equal}
\si_{K-L}(G') =\si_{K-L}(G)\qq\text{if } K \ge L.
\end{equation}

Assume $K>L$. 
By \eqref{eq:bounds}--\eqref{eq:equal} and \eqref{eq:sisub}, 
\begin{align*}
\si_{(K-L)t}(G') &\le \si_{K-L}(G')^t\\
&=\si_{K-L}(G)^t \le (\mu(G)+\eta_{K-L})^{(K-L)t},
\qq t \ge 1.
\end{align*}
Take $(K-L)t^\text{th}$ roots and let $t\to\oo$, to
obtain that $\mu(G') \le \mu(G)+ \eta_{K-L}$, and
the claim follows with $\eps_k=\eta_{k-L}$.

(b)
Let $D$, $R$, $\De$ and $G'$ satisfy the given conditions. Since $G'\in\Si_{D,R}$
and $\de_{G'}\le \De$, we have by \eqref{new42} that $M(G')\le m$ 
where $m=(\De^{R+1}-1)/(\De-1)$.
Let $L$ be given by \eqref{eq:L} with this value of $m$.
By \eqref{eq:bounds}--\eqref{eq:equal}
and the forthcoming Proposition \ref{swb},
there exists $B=B(D,R,\De)>0$ such that, for $K >L$,
\begin{align*}
\be(G')^{K-L} &\ge \si_{K-L}(G') e^{-B\sqrt {K-L}}\\
&=\si_{K-L}(G) e^{-B\sqrt {K-L}}
\ge \mu(G)^{K-L} e^{-B\sqrt {K-L}}.
\end{align*}
Therefore,
$$
\be(G') \ge \mu(G)  e^{-B/\sqrt {K-L}}
$$
By Theorem \ref{thm1}, $\mu(G')=\be(G')$,
and \eqref{ineq2} is proved. 
\end{proof}

\begin{proof}[Proof of Theorem \ref{thm2}]
Since $G_n \in \sG_{D,R}$ and $K(G,G_n) \to\oo$, 
by \eqref{new42} the $M(G_n)$
are uniformly bounded, and hence the $\de_{G_n}$ are
also uniformly bounded.
The claim is now immediate by Proposition \ref{prop2}. 
\end{proof}

\section{Proof of Theorem \ref{thm1}: the transitive case}
\label{sec:pf1}

We adapt and extend
the \lq bridge decomposition' approach of Hammersley and Welsh \cite{HW62}, 
which was originally
specific to the hypercubic lattice.
A \emph{distinct partition} $\Pi$ of the integer $n \ge 1$  
is an expression of the form
$n=a_1+a_2 + \dots +a_k$ with 
integers $a_i$ satisfying $a_1>a_2> \dots >a_k >0$ and some $k=k(\Pi) \ge 1$. 
The number $k(\Pi)$ is the \emph{order} of the partition
$\Pi$, and the number of distinct partitions of $n$ is
denoted $P(n)$. We recall
two facts about such distinct partitions.

\begin{lemma}\label{HRlem}
The order $k=k(\Pi)$ and the number $P(n)$ satisfy
\begin{alignat}{2}
k(k+1) &\le 2n \q &&\text{for all distinct partitions $\Pi$ of $n$},\label{pt0}\\ 
\log P(n) &\sim \pi\sqrt{n/3}\quad &&\text{as } n\to\infty.
\label{pt}
\end{alignat}
\end{lemma}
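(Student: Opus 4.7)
The statement collects two essentially classical facts about partitions into distinct integers, so my plan is to treat them separately, with the first being a one-line elementary bound and the second being a reference to the Hardy--Ramanujan asymptotics.

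For \eqref{pt0}, I would argue as follows. Given any partition $n = a_1 + a_2 + \cdots + a_k$ into distinct positive integers with $a_1 > a_2 > \cdots > a_k \ge 1$, an easy induction on $i$ (or just the strict decrease) gives $a_{k-i+1} \ge i$ for $1 \le i \le k$, so the $a_j$'s dominate termwise a permutation of $\{1,2,\dots,k\}$. Summing,
\[
n \;=\; \sum_{j=1}^k a_j \;\ge\; \sum_{i=1}^k i \;=\; \tfrac12 k(k+1),
\]
which rearranges to \eqref{pt0}. This is the entire content of part (a).

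For \eqref{pt}, the quantity $P(n)$ is the number of partitions of $n$ into \emph{distinct} positive parts, customarily denoted $q(n)$ in the partition literature. Its asymptotic behaviour is the classical Hardy--Ramanujan formula for partitions into distinct parts, namely
\[
q(n) \;\sim\; \frac{1}{4\cdot 3^{1/4}\, n^{3/4}}\, \exp\!\Bigl(\pi\sqrt{n/3}\Bigr) \qquad \text{as } n\to\infty,
\]
obtained by the circle method applied to the generating function $\prod_{m\ge 1}(1+x^m)$, or equivalently (via Euler's identity) by analysing the generating function $\prod_{m\ge 1}(1-x^{2m-1})^{-1}$ of partitions into odd parts. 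Taking logarithms, the polynomial prefactor is absorbed into the $o(\sqrt n)$ error, yielding \eqref{pt}. In the proof I would simply invoke this asymptotic, citing a standard reference such as Hardy--Ramanujan or Andrews's monograph, rather than reproducing the saddle-point analysis.

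The only genuine obstacle is to make sure the reader recognises that $P(n)$ in our convention counts partitions into \emph{distinct} parts (which is why the constant is $\pi\sqrt{1/3}$ rather than the more familiar $\pi\sqrt{2/3}$ associated with unrestricted partitions); once that is clarified, both parts are essentially free.
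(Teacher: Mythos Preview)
Your proposal is correct and matches the paper's own proof essentially line for line: the paper derives \eqref{pt0} from the observation that the sum of the first $k$ natural numbers is $\tfrac12 k(k+1)$, and for \eqref{pt} it simply cites Hardy--Ramanujan. Your additional remark distinguishing the distinct-parts constant $\pi\sqrt{1/3}$ from the unrestricted $\pi\sqrt{2/3}$ is a helpful clarification that the paper omits.
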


\begin{proof}
The sum of the first $r$ natural numbers 
is $\frac12 r(r+1)$. Therefore, if $r$ satisfies
$\frac12r(r+1)>n$, the order of $\Pi$ is at most $r-1$.
See \cite{HR} for a proof of  \eqref{pt}. 
\end{proof}

Let $G$ be a graph with the given properties, and let $(h,\sH)$ 
be a \ughf\ on $G$.
For the given $(h,\sH)$, and $v \in V$,
we let $b_{n}(v)$ and $c_n(v)$  be the counts of bridges and half-space SAWs 
starting at $v$, \resp, as in Section \ref{sec:saws}.
Recall the constants $d=d(h)$, $r=r(h,\sH)$ given after Definition \ref{def:height}.

\emph{We assume first that $\sH$ acts transitively} (so that, in particular,  $r=0$),
and we add some notes in Section \ref{sec:pf1b} about the quasi-transitive case.

Write $b_n=b_n(v)$ and $c_n=c_n(v)$.
It is elementary that $b_n \le c_n$.
The main idea is to show that $c_n \le  e^{f(n)} b_n$ for
some sublinear function $f$. This is shown by 
`unwrapping' a half-space walk into a bridge, and keeping
track of certain multiplicities. The plan of the proof is as in \cite{HW62}, but the details
are more complicated.

\begin{proposition}\label{hsb}
There exists an absolute constant $A$ such that 
$c_n \le e^{A\sqrt{dn}}b_n$ for $n \ge 1$.
\end{proposition}

The `unwrapping' process involves replacing certain SAWs $\si$
by their images $\g\si$ under certain tailored automorphisms $\g\in\sH$.
The key element in controlling the combinatorics of unwrapping
is the following pair of lemmas, which are based on the unimodularity of $\sH$.

\begin{lemma}\label{lem:msp}
Let $\si$ be a SAW of $G$ with initial (\resp, final) vertex $a$ (\resp, $b$), 
and let $\sH_{u,v} =\{\g\in\sH: \g(u)=v\}$. 
We have that
\begin{equation}\label{new34}
|\sH_{a,v}\si| =  |\sH_{b,v}\si|, \qq v \in V. 
\end{equation}
\end{lemma}

\begin{proof}
For each distinct $\nu\si$, as $\nu$ ranges over $\sH$,
we send one unit of mass from $\nu(a)$ to $\nu(b)$.
Let $m(v,w)$ be the total mass sent from $v$ to $w$. 
By the mass transport principle (see, for example, \cite[eqn (8.4)]{LyP}),
\begin{equation}\label{new35}
\sum_{w\in V} m(v,w) = \sum_{w\in V} m(w,v), \qq v \in V.
\end{equation}
The left side of \eqref{new34} is the mass exiting $v$, and the right side 
is the incoming mass at $v$. By \eqref{new35}, these are equal.
\end{proof}

\begin{lemma}\label{lem:unim2}
Let $a \in V$,  $t \ge 1$, and let $\si=(\si_0,\si_1,\dots,\si_t)$ be a SAW starting at
$\si_0=a$. Let $\si'\in \sH_{b,a}\si$ where $b=\si_t$.
Consider the bipartite graph $B$ with vertex-sets $R:=\sH_{a,a}\si$ 
(coloured red) and $Y:=\sH_{a,a}\si'$ 
(coloured yellow), and an edge between $\si_1\in R$ and  $\si_1'\in Y$
if and only if $\si_1' \in \sH_{b_1,a} \si_1$ where $b_1$ is the endvertex of $\si_1$
other than $a$. 
The graph $B$ is complete bipartite, and the numbers of red and yellow vertices are equal.
\end{lemma}

Write $\si'=\a\si$ where $\a\in\sH_{b,a}$, as in the lemma.
Recall that SAWs have directions:  $\si$ goes from $a=\si_0$ to $b=\si_t$,
and $\si'$ goes from $\a(a)$ to $a$. Later we shall consider the SAW 
obtained by reversing the direction of $\si'$.

\begin{proof}
Let $\si'=\a\si$ where $\a\in\sH_{b,a}$.
To prove that $B$ is \emph{complete} bipartite, it suffices 
that, for $\si_1\in R$,
\begin{equation}\label{gl34}
\sH_{b_1,a} \si_1 = \sH_{a,a}\si',
\end{equation}
where $b_1$ is given in the statement of the lemma.
Let  $\si_1=\g\si$ where $\g\in \sH_{a,a}$, and let $\a_1\in\sH_{b_1,a}$. 
Then 
$$
\a_1\si_1=\a_1\g\si=\a_1\g\a^{-1}\si',
$$
and $\a_1\g\a^{-1}(a) = a$,
whence $\sH_{b_1,a} \si_1 \subseteq \sH_{a,a}\si'$. Conversely, let
$\si_2'\in Y$, say $\si_2'= \be\si'$ with $\be\in\sH_{a,a}$.
Then 
$$
\si_2'=\be\a\si=\be\a\g^{-1}\si_1,
$$
and $\be\a\g^{-1}(b_1)=a$ as required.

The numbers of red and yellow vertices are equal if and only if the degree
of $\si$ equals that of $\si'$. The degree of $\si$ is $|\sH_{b,a}\si|$.
That of $\si'$ is
\begin{align*}
\bigl|\{\g^{-1}\si': \g\in\sH,\  \g^{-1}(c)=a\}\bigr|
&=|\sH_{a,a}\si|,
\end{align*}
where $c$ is the endvertex of $\si'$ other than $a$.
By Lemma \ref{lem:msp} with $v=a$, these are equal, and the lemma is proved.
\end{proof}

\begin{proof}[Proof of Proposition \ref{hsb}]
 Let $n\geq 1$, 
and let $\pi=(\pi_0,\pi_1,\dots,\pi_n)$ be an $n$-step half-space SAW 
starting at $\pi_0=\id$. 
Let $n_0=0$, and for $j \ge 1$,  define $S_j=S_j(\pi)$ and $n_j=n_j(\pi)$ recursively as follows:
\begin{equation*}
S_j=\max_{n_{j-1}\leq m\leq n}(-1)^j\bigl[h(\pi_{n_{j-1}})-h(\pi_m)\bigr],
\end{equation*}
and $n_j$ is the largest value of $m$ at which the maximum is attained. 
The recursion is stopped at the smallest integer $k=k(\pi)$ such that $n_k=n$, 
so that $S_{k+1}$ and $n_{k+1}$ are undefined. 
Note that $S_1$ is the span of $\pi$ and, more generally,
$S_{j+1}$ is the span of the SAW $\ol\pi^{j+1} := (\pi_{n_j},\pi_{n_j+1},\dots,\pi_{n_{j+1}})$. 
Moreover, each of the subwalks $\ol\pi^{j+1}$ is either a bridge or
a reversed bridge. 
We observe that $S_1>S_2>\dots>S_k>0$. 

For a decreasing sequence of $k\ge 2$ positive integers $a_1>a_2>\dots>a_k>0$, 
let $B_n^v(a_1,a_2,\dots,a_k)$ be the set of ($n$-step) half-space walks from $v\in V$ such that 
$k(\pi)=k$, $S_1(\pi)=a_1$, 
$\dots$, $S_k(\pi)=a_k$ and $n_k(\pi)=n$ 
(and hence $S_{k+1}$ is undefined). In particular, $B_n^v(a)$ is the set of $n$-step bridges 
from $v$ with span $a$.  Set $B_n = B_n^\id$.

\begin{lemma}\label{lem:BB}
We have that
\begin{equation}\label{pi'in2}
|B_n(a_1,a_2,\dots,a_k)| \le
 \begin{cases} |B_{n}(a_1+a_2+a_3,a_4,\dots,a_k)| &\text{if } k \ge 3,\\
|B_{n}(a_1+a_2)| &\text{if } k=2.
\end{cases}
\end{equation}
\end{lemma}

\begin{proof}
Let $\pi\in B
_n(a_1,a_2,\dots,a_k)$. We first describe  how to perform surgery on $\pi$
in order to obtain a SAW $\pi'$ satisfying
\begin{equation}\label{pi'in3}
\pi' \in \begin{cases} B_n(a_1+a_2+a_3,a_4,\dots,a_k) &\text{if } k \ge 3,\\
B_n(a_1+a_2) &\text{if } k=2,
\end{cases}
\end{equation}
and then we consider certain multiplicities associated with such mappings $\pi \mapsto \pi'$.

\begin{figure}[htbp]
\centerline{\includegraphics[width=0.7\textwidth]{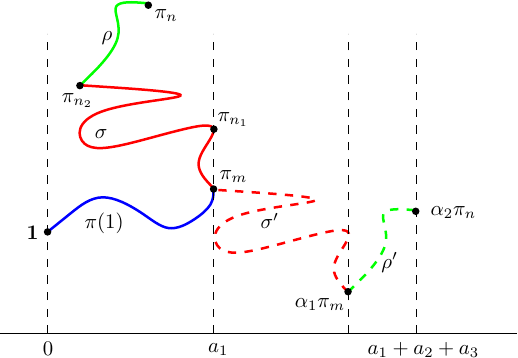}}
    \caption{The solid SAW lies in $B_n(a_1,a_2,a_3)$. We map the red path connecting $\pi_{n_2}$ to
   $\pi_m$, and also the third sub-SAW of $\pi$, thereby obtaining a SAW in $B_{n}(a_1+a_2+a_3)$.
   After translation, the paths are dashed.}
 \label{fig:surgery0}
\end{figure}

The new SAW $\pi'$ is constructed in the following way, 
as illustrated in Figure \ref{fig:surgery0}. 
Suppose first that $k \ge 3$.
\begin{numlist}
\item Let $m=\min\{k: h(\pi_k)=S_1\}$, and let
$\pi(1)$ be the sub-SAW from $\pi_0=\id$ to the vertex $\pi_m$.

\item Let 
$\si:=(\pi_m,\dots,\pi_{n_2})$ and $\rho=(\pi_{n_2}, \dots, \pi_n)$ be the two sub-SAWs of $\pi$
with the given endvertices.  
We find $\a_1\in\sH$ such that $\a_1\pi_{n_2}=\pi_m$.
(This uses the transitive action of $\sH$.)
The concatenation of the two SAWs $\pi(1)$ and 
$\a_1\si$ (reversed) is a SAW, denoted $\pi(2)$, from $\id$ to $\a_1\pi_m$. 
In concluding that $\pi(2)$ is
a SAW, we have made use of Definition
\ref{def:height}(b). Note that $h(\a_1\pi_m) = a_1+a_2$.

\item We next find $\a_2\in\sH$ such that $\a_2(\pi_{n_2})=\a_1(\pi_m)$.
The concatenation of the two SAWs $\pi(2)$ and 
$\a_2\rho$ is a SAW, denoted $\pi'$, from $\id$ to $\a_2\pi_n$.
Note that $S_1(\pi')=a_1+a_2+a_3$.
\end{numlist}

The ensuing $\pi'$ satisfies
$\pi'\in B_n(a_1+a_2+a_3,a_4,\dots,a_k)$.
Note that $\pi$
is not generally reconstructible from knowledge of $\pi'$. 

Suppose now that $k=2$. At Step 2 above, we have that $h(\pi_n) = S_1-S_2$,
so that $n_2=n$ and $\pi'\in B_n(a_1+a_2)$. 

We consider next the multiplicities associated with the map $\pi\mapsto\pi'$:  (i)
for given $\pi$, how many possible choices of $\pi'$ exist, and (ii) for given $\pi'$,
how many pre-images are there? There are two stages to be considered, 
arising in Steps 2 and 3 above. We begin at Step 2.

We assume $k \ge 3$ (the case $k=2$ is similar).
In the above construction, we map $\pi$
to  $\pi'$, and we write
$\pi=(\pi(1),\si,\rho)$ and $ \pi'=(\pi(1),\si',\rho')$ as in Figure \ref{fig:surgery0},
where $\si' =\a_1\si$ and $\rho'=\a_2\rho$. 
Note that, for given $(a_i)$, $\pi$ and $\pi'$ have unique representations in this form.
The representation of $\pi$ is given as above; for given such $\pi'$,
$\pi(2)$ is the shortest sub-SAW from $\id$ to a vertex with height $a_1$, $\rho'$
is the shortest sub-SAW from a vertex with height $a_1+a_2$ to the final endvertex
of $\pi'$, and $\si'$ is the remaining SAW. 
We fix $\pi(1)$ for the moment, and
consider the relationship between $\si'$  and $\si$.  
As usual, $a$ denotes the final endvertex of $\pi(1)$.

Let $a\in V$, $s \ge 1$, and let $\Si(a,s)$ be the set of  SAWs 
$\si=(\si_0,\si_1,\dots,\si_t)$ with $\si_0=a$ and satisfying
$$
h(\si_t)=h(a)-s,\qq h(a)-s\le h(\si_l)\le h(a) 
\q\text{for} \q 1\le l\le t.
$$

Write $\Si:=\Si(a,a_2)$. The SAW $\si$ ranges
over the subset $\ol\Si\subseteq \Si$ comprising all $\si_1\in\Si$
that do not intersect $\pi(1)$ (other than at $a$). 
By Lemma \ref{lem:unim2}, the image $\si'$ lies in the set 
$I:=\bigcup\{\sH_{b_1,a}\si_1: \si_1 \in \Si\}$, where $b_1$ 
denotes the endvertex of $\si_1$ other than $a$.
Now, $\Si$ may be partitioned as $\sP=\{\sH_{a,a}\si: \si\in\Si\}$,
and similarly $I$ may be partitioned as  $\sP'=\{\sH_{a,a}\si': \si'\in I\}$. 
By Lemma \ref{lem:unim2}, there is a one-to-one correspondence
between $\sP$ and $\sP'$, and any corresponding pair $(P_1,P_2) \in\sP\times \sP'$
of sets satisfies $|P_1|=|P_2|$.   It follows that
\begin{equation}\label{gl44}
|\ol\Si| \le |\Si| = |I|.
\end{equation}

A simpler argument is valid for the pair $(\rho, \rho')$ arising at Step 3.
Write $c$ for the endvertex of $\si'$ other than $a$. Then $\rho$ ranges over 
a subset $\ol B\subseteq B_{n-m}^b(a_3,\dots,a_k)$ where $m$ is the length of
$\pi(1)\cup \si$, and its image $\rho'$ lies in $\sH_{b,c}B_{n-m}^b(a_3,\dots,a_k)$.
(As earlier, $b$ denotes the initial vertex of $\rho$.) Hence,
\begin{align}\label{gl45}
|\ol B| \le \bigl|B_{n-m}^b(a_3,\dots,a_k)\bigr|=\bigl|B_{n-m}^c(a_3,\dots,a_k)\bigr|,
\end{align}
since, by transitivity, the last two sets are in one-to-one correspondence.

The set $B_n(a_1,a_2,\dots,a_k)$ is the union of all such $(\pi(1),\si,\rho)$.
Inequality \eqref{pi'in2} follows by \eqref{gl44}--\eqref{gl45}.  
Lemma \ref{lem:BB} is proved.
\end{proof}

Write $\sum_a^{(k,T)}$ for the summation
over all finite integer sequences $a_1>\dots>a_k>0$ with 
given length $k$ and sum $T$. By iteration of \eqref{pi'in2},
\begin{align}\label{new37}
c_n&\le \sum_{T=1}^{dn} \sum_{k=1}^n\sum_a^{(k,T)} \bigl|B_n(a_1,\dots,a_k)\bigr|\\
&\leq \sum_{T=1}^{dn}\sum_{k=1}^n\sum_a^{(k,T)}
  |B_n(a_1+a_2+\dots+a_k)|.\nonumber
\end{align}
By \eqref{pt}, 
there exists an absolute constant $A$ such that
\begin{align}\label{new38}
c_n &\le   
\sum_{T=1}^{dn}\sum_{k=1}^n\sum_a^{(k,T)} b_n
\le e^{A\sqrt{dn}} b_n,
\end{align}
as required for Proposition \ref{hsb}.
\end{proof}

\begin{proposition}\label{swb2}
There exists an absolute constant $B$ such that
$\si_n\leq  e^{B\sqrt{dn}}b_{n+1}$ for $n \ge 1$.
\end{proposition}

\begin{proof}
Let $\pi=(\pi_0,\pi_1,\dots,\pi_n)\in \Si_n$.
Let $H=\min_{0\le i \le n}h(\pi_i)$ and 
$m=\max\{i: h(\pi_i)=H\}$. 
For each such $\pi$, we send one unit of mass
from $\pi_0$ to $\pi_m$, so that the total mass leaving each $v\in V$ is
$\si_n$. By the mass transport principle
(see \cite[Eqn (8.4)]{LyP}), the total mass arriving at each 
$v \in V$ is also $\si_n$. 

Each $\pi\in \Si_n$, seen from 
the vertex $\pi_m$, is the union
of two SAWs $\pi(1)=(\pi_m,\pi_{m-1},\dots,\pi_0)$
and $\pi(2)=(\pi_m,\pi_{m+1},\dots,\pi_n)$.
These two SAWs are vertex-disjoint except at $\pi_m$, 
and $\pi(2)$ is a half-space SAW. We pick $w\in \pd \pi_m$ such that
$h(w)<h(\pi_m)$, and extend $\pi(1)$ by adding $w$ at the start,
thus obtaining a $(m+1)$-step half-space SAW.  
Therefore,
$$
\si_n \le \sum_{m=0}^n c_{m+1}c_{n-m}.
$$

By Proposition \ref{hsb},
$$
\si_n \le \sum_{m=0}^n U_{m+1}U_{n-m},
$$
where $U_k = e^{A\sqrt {dk}}b_k$.
Therefore,
\begin{align*}
\si_n &\le 
\sum_{m=0}^n 
\exp\Bigl(A\sqrt{d(m+1)}+A\sqrt{d(n-m)}\Bigr)b_{m+1}b_{n-m}\\
&\le e^{A\sqrt{2d(n+1)}}b_{n+1},
\end{align*}
by \eqref{eq:b-subadditive} and the fact that $\sqrt x+\sqrt y \leq \sqrt {2x+2y}$.
The claim follows.
\end{proof}

It is trivial that $b_n \le \si_n$, whence $\be \le \mu$.
The reverse inequality follows by Proposition \ref{swb2}, and Theorem 
\ref{thm1} is proved in the transitive case.

\section{Theorem \ref{thm1}: the  quasi-transitive case}\label{sec:pf1b}

We present the further steps needed to prove Theorem \ref{thm1} when
the \ughf\ $(h, \sH)$ is such that $\sH$ acts only \emph{quasi-transitively} on $G$.
Proposition \ref{hsb} is replaced by the following. Recall the maximum
vertex-degree $\de_G$, and the integer $r$ given before Proposition 
\ref{lem:rfin}.

\begin{proposition}\label{hsb2}
There exists $A=A(d,r,\de_G)$, that is non-decreasing in $d$, $r$, and $\de_G$,
such that 
$c_n(v) \le e^{A\sqrt{n}}\be^{n}$ for $n \ge 1$ and $v \in V$.
\end{proposition}

\begin{proof}
We follow the proof of Proposition \ref{hsb}, with the following differences.
Let $o_1,o_2,\dots,o_M$ be representatives of the equivalence classes of $\sH$. By the definition of $r$, we may choose such $o_i$ satisfying
$d_G(o_i,o_j)\le r$ for all $i,j$.
A vertex $v \in V$ is said to have \emph{type} $o_i$ if
$v \in \sH o_i$. 
Lemmas \ref{lem:msp} and \ref{lem:unim2} are replaced by the following.

\begin{lemma}\label{lem:qmsp}
Let $\si$ be a SAW with initial (\resp, final) vertex $a$ (\resp, $b$), and assume
$a$ has type $o_a$ and $b$ has type $o_b$.
There exist  constants  $\xi_1,\xi_2,\dots,\xi_M>0$, depending on $G$ and $\sH$ only, such that,
for $v,w\in V$ with respective types $o_a$ and $o_b$,
\begin{equation}\label{new36}
\frac 1{\xi_a} |\sH_{a,v}\si|= \frac 1{\xi_b}|\sH_{b,w}\si|.
\end{equation}
Furthermore, 
\begin{equation}\label{eq:defmu}
\xi:=\max\bigl\{\xi_i/\xi_j: i,j=1,2,\dots,M\bigr\}
\end{equation}
satisfies
$\xi \le |S_{2r}(\id)|$, and hence
\begin{equation}\label{new40}
\xi\le \frac{\de_G^{2r+1}-1}{\de_G-1}.
\end{equation}
\end{lemma}

\begin{proof}
Equation \eqref{new36} follows by adapting the proof of Proposition \ref{hsb} 
and appealing to \cite[Cor.\ 8.11]{LyP}. By \cite[Thm 8.10]{LyP},
\begin{align*}
\frac{\xi_i}{\xi_j} &= \frac{|\Stab_{o_i}o_j|}{|\Stab_{o_j} o_i|} 
\le |\Stab_{o_i} o_j|\\
&\le |S_r(o_i)| \le |S_{2r}(\id)|,
\end{align*}
by the definition of $r=r(h,\sH)$.
\end{proof}

\begin{lemma}\label{lem:qunim2}
Let $a \in V$,  $t \ge 1$, and let $\si=(\si_0,\si_1,\dots,\si_t)$ be a SAW starting at
$\si_0=a$. Let  $b=\si_t$ and let $b'\in V$ have the same type as $b$.
Fix $\si'\in \sH_{b,b'}\si$. Consider the bipartite graph $B$ with vertex-sets $R:=\sH_{a,a}\si$ 
(coloured red) and $Y:=\sH_{b',b'}\si'$ 
(coloured yellow), and an edge between $\si_1\in R$ and $\si_1'\in Y$
if and only if $\si_1' \in \sH_{b_1,b'} \si_1$ where $b_1$ is the endvertex of $\si_1$
other than $a$. 
The graph $B$ is complete bipartite, and $\xi^{-1}|Y| \le|R| \le \xi|Y|$
where $\xi$ is given by \eqref{eq:defmu}.
\end{lemma}

\begin{proof}
That $B$ is \emph{complete} bipartite follows as in the proof of Lemma \ref{lem:unim2}.
The final inequality holds similarly, but using \eqref{new36}
in place of  \eqref{new34}.
\end{proof}

Let $\pi\in B_n^v(a_1,a_2,\dots,a_k)$. 
We shall perform surgery on $\pi$ to obtain a SAW $\pi'$ satisfying
\begin{equation}\label{pi'in}
\pi' \in \begin{cases} B_{n+\th}^v(a_1+a_2+a_3+\de,a_4,\dots,a_k) &\text{if } k \ge 3,\\
B_{n+\th}^v(a_1+a_2+\de) &\text{if } k=2,
\end{cases}
\end{equation}
for some $\th=\th(\pi)$ and $\de=\de(\pi)$ satisfying
$0\le\th\le 2r$ and $\de\ge 0$. 

\begin{figure}[htbp]
\centerline{\includegraphics[width=0.7\textwidth]{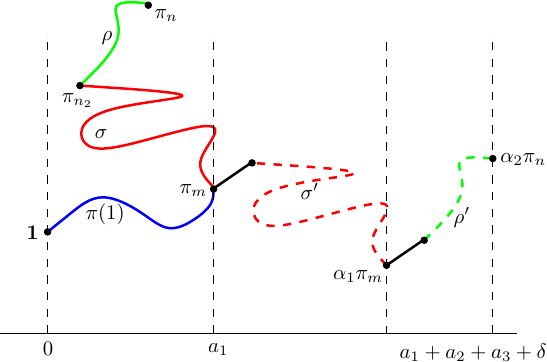}}
     \caption{In the non-transitive case, further SAWs are introduced
   in order to make the required connections. These 
   extra connections are drawn in black, and have lengths
   no greater than $r$.} 
   \label{fig:surgery}
\end{figure}

The  SAW $\pi'$ is constructed as in Steps 1--3 of the previous proof, but with a significant
extra feature. In the construction of $\pi'$  in Section \ref{sec:pf1}, 
the two sub-SAWs $\si$ and $\rho$ are mapped to
new SAWs with the given starting vertices. This cannot generally be done in the non-transitive
setting, since the target starting vertices may not have the 
appropriate types.

Steps 2 and 3 of the preceding proof are thus replaced as follows
(as illustrated in Figure \ref{fig:surgery}).
The SAWs $\nu(\cdot,\cdot)$ of the following are as given after \eqref{eq:defd}.
\begin{numlist}
\item [2$'$.] Let 
$\si:=(\pi_m,\dots,\pi_{n_2})$ and $\rho=(\pi_{n_2}, \dots, \pi_n)$ be the two sub-SAWs of $\pi$
with the given endvertices.  The type of $\pi_m$ (\resp, $\pi_{n_2}$)
is denoted $i$ (\resp, $j$). In particular, $\pi_m=\g(o_i)$ for some $\g\in\sH$.
We map the SAW $\nu(o_i,v_j)$, given after \eqref{eq:defd}, under $\g$ to
obtain a SAW denoted $\nu(\pi_m,\a_1\pi_{n_2})$, where $\a_1\in\sH$ is such that 
$\a_1(\pi_{n_2}) = \g(v_j) $. Note that $\nu(\pi_m,\a_1\pi_{n_2})$ is
 the single point $\{\pi_m\}$ if $i=j$,
and $\a_1(\pi_{n_2})=\pi_m$ in this case.

The union of the three SAWs $\pi(1)$, $\nu(\pi_m,\a_1\pi_{n_2})$, and 
$\si':=\a_1\si$ (reversed) is a SAW, denoted $\pi(2)$, from $v$ to $\a_1\pi_m$. 
Note that $a_1+a_2 \le h(\a_1\pi_m)\le a_1+a_2+r$.

\item [3$'$.]
We next perform a similar construction to connect $\a_1\pi_m$ to an image of the first vertex $\pi_{n_2}$
of $\rho$. These two vertices have types $i$ and $j$, as before, and thus we insert
the SAW  $\nu(\a_1\pi_m,\a_2\pi_{n_2}) :=\a_1\g\nu(o_i,v_j)$, where $\a_2\in\sH$ 
is such that $\a_2(\pi_{n_2}) = \a_1\g(v_j)$. 
The union of the three SAWs $\pi(2)$, $\nu(\a_1\pi_m,\a_2\pi_{n_2})$, and 
$\rho':=\a_2\rho$, is a SAW, denoted $\pi'$, from $\id$ to $\a_2\pi_n$.
\end{numlist}

The resulting SAW  satisfies 
$\pi'\in B_{n+\th}^v(a_1+a_2+a_3+\de,a_4,\dots,a_k)$
for some $0\le \th\le 2r$ and $\de\ge 0$.
In this non-transitive scenario, there are two reasons for which  $\pi$
is not generally reconstructible from knowledge of $\pi'$:
(i) we need to identify the intermediate SAWs $\nu(\cdot)$, and (ii)
the maps $\si\mapsto \si'$ and $\rho\mapsto \rho'$ are not bijections.
Since the intermediate SAWs have lengths no greater
than $r$, issue (i) contributes at worst a factor $(r+1)^2$ when $k \ge 3$
(and a factor $(r+1)$ when $k=2$). 
Issue (ii) is controlled as in the previous proof,  using Lemma
\ref{lem:qunim2} in place of Lemma \ref{lem:qmsp}, thus introducing a factor
$\xi^k$. 

Equation \eqref{new37} is replaced in the quasi-transitive context by
\begin{align*}
c_n(v)&\le \sum_{T=1}^{dn} \sum_{k=1}^n\sum_a^{(k,T)} \bigl|B_n^v(a_1,\dots,a_k)\bigr|\\
&\leq \sum_{T=1}^{dn}\sum_{k=1}^n\sum_a^{(k,T)}
\xi^k (r+1)^k \sum_{s=0}^{kr} b_{n+s}(v).
\end{align*}
As in \eqref{new38}, by \eqref{eq:bnabove}, \eqref{new40}, and that fact that $\be\le\de_G$, 
there exists a constant $A=A(d,r,\de_G)$ with the required 
properties such that
\begin{align*}
c_n(v) &\le   
\sum_{T=1}^{dn}\sum_{k=1}^n\sum_a^{(k,T)} \xi^k(r+1)^k(kr+1)\be^{n+kr+r}\\
&\le \sum_{T=1}^{dn}\sum_{k=1}^n\sum_a^{(k,T)} \be^{n+r}\{\be^r\xi (r+1)\}^{\sqrt{2dn}}\bigl(r\sqrt{2dn}+1\bigr)\\
&\le e^{A\sqrt{n}}\be^{n},
\end{align*}
as required.
\end{proof}

\begin{proposition}\label{swb}
There exists $B=B(d,r,\de_G)>0$, that is non-decreasing in
$d$, $r$, and $\de_G$, such that
$\si_n(v)\le e^{B\sqrt{n}}\be^n$ for $n \ge 1$ and $v \in V$.
\end{proposition}

\begin{proof}
We adapt the proof of Proposition \ref{swb2} to the quasi-transitive setting.
By the mass transport principle, as there,
$$
\sum_i \frac 1{\xi_i}\si_n(o_i) = \sum_j \frac 1{\xi_j} I(o_j),
$$
where $I(v)$ denotes the total mass entering the vertex $v$.
Therefore,
$$
\si_n(v) \le \xi M \sum_{m=0}^n U_{m+1}U_{n-m},
$$
where $U_k = e^{A\sqrt k}\be^k$ and $M=M(\sH)$ is
the number of equivalence classes of $\sH$.
The proof is completed as before.
\end{proof}

\section*{Acknowledgements} 
This work was supported in part
by the Engineering and Physical Sciences Research Council under grant EP/I03372X/1.  
ZL acknowledges support from the Simons Foundation $\#$351813 and the National Science Foundation $\#1608896$.
GRG thanks Russell Lyons for a valuable conversation.
We thank an anonymous referee
for a number of suggestions and two important observations.

\bibliography{sawfinal-23}
\bibliographystyle{amsplain}

\end{document}